\newtheorem{theorem}{Theorem}[section]
\newtheorem{proposition}[theorem]{Proposition}
\newtheorem{corollary}[theorem]{Corollary}
\newtheorem{lemma}[theorem]{Lemma}
\newtheorem{remark}[theorem]{Remark}
\newtheorem{definition}[theorem]{Definition}
\newtheorem{example}[theorem]{Example}
\newtheorem{notation}[theorem]{Notations}
\def\NZQ{\Bbb}
\def\NN{{\NZQ N}}
\def\ZZ{{\NZQ Z}}
\def\CC{{\NZQ C}}
\def\PP{{\NZQ P}}
\def\SS{{\NZQ S}}
\def\TT{{\NZQ T}}
\def\ml{\mathcal{C}}
\def\ml1{\mathcal{C}^1}
\def\mlb1{\mathcal{C}_{b}^{1}}
\def\frk{\frak}
\def\Phi{{\frk n}}
\def\Codim{{\rm codim}}
\def\Card{{\rm Card}}
\def\dim{{\rm dim}}
\def\A{{\mathcal A}}
\def\B{{\mathcal B}}
\def\H{{\mathcal H}}
\def\L{{\mathcal L}}
\newcommand{\C}[0]{\mathbb{C}}
\newcommand{\Zt}[0]{\mathcal Z}
\newcommand{\Cal}[1]{\mathcal{#1}}
\newcommand{\bd}[1]{\textbf{#1}}
\renewcommand{\P}{\bd P}
\newcommand{\codim}[0]{\mbox{codim }}
\begin{document}

\title{Strata of discriminantal arrangements}

\author{Anatoly Libgober} 
\address{Department of Mathematics, 
University of Illinois, Chicago, IL 60607} 
\email{libgober@uic.edu}

\author{Simona Settepanella}
\address{Department of Mathematics, 
University of Hokkaido, Sapporo, Japan.}
\email{s.settepanella@math.sci.hokudai.ac.jp}

\subjclass[2000]{52C35 52B35 20F36 14-XX 05B35}

\keywords{discriminantal arrangements, braid groups, fundamental
 groups,  Gale transform.}

\begin{abstract} We give an explicit description of the 
multiplicities of codimension two strata 
of discriminantal arrangements introduced by Manin and Schechtman. As applications, 
we discuss the connection of these results with properties of Gale transform and 
we calculate the fundamental groups of the complements
to discriminantal arrangements. 
\end{abstract}

\maketitle

\centerline {\it In memory of Brieskorn}
\noindent

\section{Introduction}

In 1989, Manin and Schechtman (\cite{MS}) 
introduced a family of arrangements of hyperplanes 
generalizing classical braid arrangements, which they called 
the {\it discriminantal arrangements} (p.209 \cite{MS}). 
Such  an arrangement $\B(n,k,\A^0), n,k \in {\bf N}$ 
for $k \ge 2$ depends on a choice 
$\A^0=\{H^0_1,...,H^0_n\}$ of a collection of hyperplanes in 
the general position in $\CC^k$, i.e., such that $\dim\bigcap_{i \in K,
 \Card~K=k} H_i^0=0$. It consists of 
parallel translates of $H_1^{t_1},...,H_n^{t_n}, (t_1,...,t_n) \in \CC^n$ 
which fail to form a general position arrangement in $\CC^k$.
$\B(n,k,\A^0)$ can be viewed as a  
generalization of the pure braid 
group arrangement (\cite{OT}) with which  
$\B(n,1)=\B(n,1,\A^0)$ coincides. 
These arrangements have several beautiful 
relations with diverse problems in the areas such as combinatorics (see \cite{MS} and also 
\cite{Crapo}, which is an earlier appearance of discriminantal
arrangmements), the Zamolodchikov equation with its relation to 
 higher category theory (see Kapranov-Voevodsky \cite{Kap}),
and the vanishing of cohomology of bundles on toric varieties
(\cite{Perling}). 

The purpose of this note 
is to study the dependence of $\B(n,k,\A^0)$ on the data
$\A^0=\{H^0_1,...,H^0_n\}$ . Paper \cite{MS} concerns the arrangements
$\B(n,k,\A^0)$ for which the intersection lattice is constant when $\A^0$
varies within a Zariski open set $\Zt$ in the 
space of general position arrangements. 
However \cite{MS} does not describe the set $\Zt$ explicitly. 
It was shown in \cite{Falk} 
that, contrary to what was frequently 
stated (see for instance \cite{Or}, sect. 8, \cite{OT} or \cite{RL}), the combinatorial type of $\B(n,k,\A^0)$ indeed depends on 
the arrangement $\A^0$ . This was done by providing an example of a
discriminantal arrangement with a combinatorial 
type distinct from the one which occurs when $\A^0$
varies within the Zariski open set $\Zt$. 
Few years later, in \cite{Athan}, Athanasiadis provided a full
description of combinatorics of $\B(n,k,\A^0)$ when $\A^0$ belongs to
$\Zt$. In particular, in this case, codimension $2$ strata of
$\B(n,k,\A^0)$ only have a multiplicity equal to $2$ or $k+2$ . Following \cite{Athan}, we call arrangements $\A^0$ in $\Zt$ {\it very generic}.

Our main result describes a  necessary and sufficient {\it geometric} condition  
on arrangement $\A^0$ assuring that $\B(n,k,\A^0)$ admits codimension $2$ strata of multiplicity $3$.

 This condition is given in terms of a notion of \textit{dependency} 
for the arrangement $\A_{\infty}$ in $\PP^{k-1}$ of hyperplanes
$H_{\infty,1},...H_{\infty,n}$ which are the intersections of projective closures of $H_1^0,...,H_n^0 \in \A^0$ with the hyperplane at infinity. 
Consider three groups of $s \in \ZZ_{\ge 1}$ hyperplanes in $\PP^{2s-2}$
such that together these $3s$ hyperplanes are in general position in
$\PP^{2s-2}$. If the three subspaces corresponding to this split in groups, each being the intersection of hyperplanes in
each group, span a hyperplane in $\PP^{2s-2}$, we say that the arrangement of
$3s$ hyperplanes in $\PP^{2s-2}$ is \textit{dependent} (Definition \ref{depend} in Section  \ref{mainsection}). This dependence condition defines a proper Zariski closed subset of the space of arrangements of $3s$
hyperplanes in $\PP^{2s-2}$ in general position. Our main result
(Theorem \ref{proofmain}) shows that $\B(n,k,\A^0), k>1$ admits a codimenion two stratum of
multiplicity $3$ if and only if $\A_{\infty}$ is an arrangement
in $\PP^{k-1}$ admitting a restriction\footnote{Here restriction is the standard restriction of arrangements to subspaces as defined in \cite{OT} (see also equation (\ref{eq:depen}) in this paper).} which is a dependent arrangement . 

Subsequently, in Section \ref{corol}, we interpret this result 
in terms of the Gale transform. The relation between discriminantal 
arrangements and the Gale transform can be seen, at least implicitly,  already 
in paper \cite{Falk}. From this view point our result asserts 
an equivalence of certain types of collinearity: the dependency of $\A_{\infty}$ is equivalent to presence of dependencies in the Gale
transform which in turn is equivalent to the presence of strata of multiplicity 3
in an arrangement $\B(n,k,\A^0)$. 
We shall give a direct verification 
of such equivalences using the interpretation of Gale transform of six-tuples of
point in $\PP^2$ in terms of  del Pezzo surfaces given in \cite{DO}.  
More precisely, an arrangement $\B(6,3,\A^0)$ depends on arrangement at infinity $\A_{\infty}$, which in this case is a six-tuple of lines in $\PP^2$,  or
equivalently, a six-tuple $(\A_{\infty})^*$
of points in the dual plane. A general position arrangement $\A_{\infty}$ is dependent  if and only if the del Pezzo surface,
which is the blow up of $\PP^2$ at six-tuple $(\A_{\infty})^*$, admits an Eckardt point (cf. subsection \ref{cubics}).
On the other hand, the interpretation of $\B(6,3,\A^0)$ via Gale transform, 
described in subsection \ref{galeassociated},
shows that presence in $\B(6,3,\A^0)$ of codimension
two strata of multiplicity 3 is equivalent to the following: {\it the Gale transform of}
$(\A_{\infty})^*$ is a six-tuple $G(\A_{\infty})^*$ such that blow up of $\PP^2$ 
at $G(\A_{\infty})^*$  is a del Pezzo surface admitting an Eckardt 
point. 
Hence the main result in the Theorem
\ref{proofmain}, in the case of discriminantal arrangments $\B(6,3,\A^0)$, becomes an invariance of 
existence of Eckardt points in the Gale transform. 
We show that this can  be verified directly (see subsection \ref{cubics}). 

Finally we supplement R.Lawrence's presentation (\cite{RL}) by giving a presentation of the fundamental
group in the case of non very generic arrangements (i.e. for 
which $\A^0 \notin \Zt$). 
In fact, we give calculations 
yielding the braid monodromy and hence a presentation of 
the fundamental group of the complement to 
a discriminantal arrangement in all cases.

Notice that in the case $k=1$, the complement to the discriminantal arrangement
$\B(n,1)$ coincides with the configuration space of ordered $n$-tuples
of points in $\CC$. A natural generalization of this configuration
space to the case $k \ge 2$ is the space of arrangements of
hyperplanes in $\CC^k$ in a general position. This is a Zariski open in
the product of $n$ copies of spaces of affine hyperplanes in $\CC^k$. 
The fundamental group of this space is another natural candidate for a
generalization of the pure braid group. Our result shows the
difficulty with a calculation of this fundamental group: natural maps
between spaces $\B(n,k,\A^0)$ for various $n,k$, which in the case $k=1$
lead to the presentation of the pure braid group fail to be locally
trivial fibrations and hence fails to produce an exact sequence of fundamental
groups. For example, intersections of projective closures
of arrangements in $\CC^k$ with the hyperplane at
infinity, yields a map from the space of general position arrangements in
$\CC^{k}$ to the space of general position arrangements in $\PP^{k-1}$. Our result
shows that this map is a locally trivial fibration only over the space of
very general position arrangements in $\PP^{k-1}$. 
%However strata of codimension 2 for
%discriminantal arrangements $\B(n,2,\A^0)$ are independent from
%$\A^0$ ( Theorem  \ref{proofmain} $(4)$) and this leads to
Calculation of  the fundamental groups
of spaces of general position arrangements of lines will be addressed elsewhere.
%This will be described in
%aforthcoming paper. 

The content of the paper is the following. In Section \ref{prelim},
we introduce several notions used later and recall 
definitions from \cite{MS}. Section \ref{mainsection}
contains one of the main results of this paper, Theorem \ref{proofmain},
describing the codimension 2 strata of discriminantal arrangements having
multiplicity 3 and showing an absence of codimension 2 strata
having a multiplicity different from $2, 3$ and $k+2$.
 The Section \ref{corol} contains the interpretation of the results in
 Section \ref{mainsection} in terms of the Gale transform. The last Section
describes the braid mondromy and fundamental groups 
of the complements to discriminantal arrangements.

Finally, the authors\footnote{\textbf{Acknowledgment}: The first named author was supported by a grant from Simons Foundation and the second maned one by JSPS Kakenhi [Grant Number 26610001 to S.S.].}
 wants to thank I.Dolgachev for a useful
comment on the material in Section \ref{corol}, B. Guerville-Balle for useful comments
and Max Planck Institute 
and University of Hokkaido for hospitality during visits
to these institutions where much of the work on this project was done.

\section{Preliminaries}\label{prelim}

\subsection{Discriminantal arrangements.}\label{discrimsubsection}

Let $H^0_i, i=1,...,n$, be a general position arrangement in $\CC^k, k<n$, i.e., 
a collection of hyperplanes such that $\dim\bigcap_{\substack{i \in K,\\
 \Card K=k}}H_i^0=0$. 
The space of parallel translates $\SS(H_1^0,...,H_n^0)$ (or simply $\SS$ when the
dependence on $H_i^0$ is clear or not essential)
is the space of $n$-tuples
$H_1,...,H_n$ such that either $H_i \cap H_i^0=\emptyset$ or 
$H_i=H_i^0$ for any $i=1,...,n$.
One can identify $\SS$ with an $n$-dimensional affine space $\CC^n$ in
such a way that $(H^0_1,...,H^0_n)$ corresponds to the origin.

We will use the compactification of the arrangement
$(H_1^0,....,H_n^0)$ obtained by viewing the ambient space $\CC^k$  
as $\PP^k\setminus H_{\infty}$ endowed with a collection of hyperplanes
$\bar H^0_i$ which are projective closures of affine hyperplanes
$H^0_i$. The condition of genericity is equivalent to $\bigcup_i \bar H^0_i$ 
being a normal crossing divisor in $\PP^k$.
The space $\SS$ can be identified with product 
$\L_1 \times \ldots \times \L_n$ 
where $\L_i \simeq \CC$ is the pencil of hyperplanes spanned by $H_{\infty}$ and
$H^0_i$ parametrized by $\PP^1$ with the deleted point. 
The latter corresponds to $H_{\infty}$ 
and the origin to $H^0_i$. In particular, an ordering of
hyperplanes in $\A$ determines the coordinate system in $\SS$.

For a general position arrangement $\A$ in $\CC^k$ 
formed by hyperplanes $H_i, i=1,...,n$,
{\it the trace at infinity} (denoted by $\A_{\infty}$)  is the arrangement 
formed by hyperplanes 
$H_{\infty,i}=\bar H^0_i\cap H_{\infty}$.

An arrangement $\A$ (or its trace $\A_{\infty}$)
determines the space of parallel translates $\SS(H_1^0,...,H_n^0)$ (as a subspace
in the space of $n$-tuples of hyperplanes in  $\PP^k$).

For a general position arrangement $\A_{\infty}$, we consider the closed subset 
of $\SS(H_1^0,...,H_n^0)$ formed by those collections 
which fail to form a general position arrangement. 
This subset is a union of hyperplanes with each 
hyperplane corresponding to a subset 
$K=\{i_1,...,i_{k+1}\}\subset \{1,...,n\}$ and consisting 
of $n$-tuples of translates of hyperplanes $H_1^0,...,H_n^0$ 
in which translates of $H_{i_1}^0,...,H_{i_{k+1}}^0$ 
fail to form a general position arrangement 
(equations are given by (\ref{hypeq}) below). 
Such a hyperplane will be denoted $D_K$.
The corresponding arrangement will 
be denoted $\B(n,k,\A)$ and called 
{\it the discriminantal arrangement corresponding to} $\A$.

The cardinality of 
$\B(n,k,\A)$ is equal to $n \choose k+1$. 
Each hyperplane $D_K$ contains the $k$-dimensional subspace 
$\TT$ of 
$\SS(H_1^0,...,H_n^0)$ formed by $n$-tuples of hyperplanes 
containing a fixed point in $\CC^k$.
Clearly, 
the essential rank, i.e. the dimension of
  the ambient space minus the dimension of intersection of the
  hyperlanes of the arrangement (cf. \cite{Stanley}), in the case of $\B(n,k,\A)$ is
$n-k$ and the arrangement induced by the
arrangement of hyperplanes $D_K$ in the quotient 
of $\SS(H_1^0,...,H_n^0)$ by $\TT$ is essential.
It is called {\it the essential part} of the discriminantal arrangement.

\subsection{Hyperplanes in $\B(n,k,\A)$}\label{hyperplanessec}

Recall that an arbitrary arrangement $\A$ of 
hyperplanes $W_1,...,W_N \subset \CC^k$ 
defines the canonical stratification of $\CC^k$ 
in which strata are defined as follows. Let $L(\A)$ be the intersection poset 
of subspaces in $\CC^k$, each being the intersection of 
a collection of hyperplanes chosen among $W_1,...,W_N$, and for each $P
\in L(\A)$, let $\Sigma_P=\{i \in  \{1,...,N\} \vert P \in W_i \}$ be 
the set of indices of hyperplanes $W_i$ such that $P = \cap_{i \in \Sigma_P} W_i$.
Vice versa,  given a subset $\Sigma \subset \{1,...,N\}$, we denote by
$w_{\Sigma}$ the subspace $w_{\Sigma}=\cap_{i \in \Sigma} W_i$.
The stratum of $P$ is the submanifold of $\CC^k$ defined as follows:
\begin{equation}
  {\mathcal S}_P =P \setminus \bigcup_{\Sigma_P \subset \Sigma}  w_{\Sigma} \quad .
\end{equation} 

If an arrangement $\A=\{W_1,...,W_N\}$ in $\CC^k$ is {\it in the general position} then the
finite subset in $\CC^k$, consisting of 
0-dimensional strata, has cardinality $N \choose k$ 
and its elements are in one to one correspondence with the subsets of 
$\{1,...,N\}$ having cardinality $k$.

The multiplicity of a point $p \in {\mathcal S}_P$ considered as a point on the
subvariety $\bigcup_{i=1,...,N} W_i$ in $\CC^k$ is constant along the stratum. 
We call it {\it the multiplicity} of the stratum ${\mathcal S}_P$.
It is equal to cardinality of the set $\Sigma_P$.

As we noted, 
the hyperplanes of $\B(n,k,\A)$ correspond to subsets of cardinality $k+1$ 
in $\{1,\ldots,n\}$. Their equations can be obtained as follows.
Let $K,  \Card~K =k+1$, be a subset in $\{1,...,n\}$ and let 
\begin{equation}\label{matrixofequations}
\alpha^j_1 y_1+\ldots+\alpha^j_{k}y_k=x_j^0, \quad j \in \{1,...,n\}
\end{equation}
be the equation of hyperplane $H_j^0$ of arrangement 
$\A=\{H_1^{0},\ldots,H_n^{0}\} \in \CC^n\setminus B(n,k,\A)$
in selected coordinates $y_1,...,y_k$ in $\CC^k$. 
The hyperplanes $H_j$, $j \in K$, of an arrangement 
in $\SS$ with equations 
$\alpha^j_1 y_1+\ldots+\alpha^j_{k}y_k=x_j, \quad j \in K$, will have non-empty 
intersection iff
\begin{equation}\label{hypeq}
    det \begin{pmatrix}\alpha_1^1 & ... & \alpha_k^1 & x_1 \cr
   ... & ... & ... & ... \cr
     \alpha_1^{k+1} & ... & \alpha_k^{k+1} & x_{k+1} \cr
\end{pmatrix} = 0 \qquad .
\end{equation}
This provides a linear equation in $x_j,j=1,...,k+1$, for the hyperplane
$D_K$ corresponding to $K$. 

Let  $J$ be a subset 
in $\{1,\ldots,n\}$ of cardinality $a$,  
\begin{equation}\label{codim2subspaces}
D_J=\{(H_1,\ldots,H_n) \in 
\SS \mbox{ such that } \cap_{i \in J} H_i \neq \emptyset \} \quad 
\end{equation}
and
\begin{equation} 
\Cal{P}_{k+1}(J)=\{K \subset J \mbox{ such that } \Card~K =k+1\} \quad .
\end{equation} 
Then 
\begin{equation}\label{eq1:inter}
D_J=\bigcap_{K \in \Cal{P}_{k+1}(J)}D_K
\end{equation}
is intersection of ${a} \choose {k+1}$ hyperplanes. 
In particular $D_J$, $\Card~J \ge k+1 $, is a linear subspace 
and the multiplicity of $\bigcup_{\Card~K =k+1} D_K$ at its 
generic point is ${a}\choose k+1$. Moreover, $\Codim~D_J$ is $a-k$.

\subsection{Projections of discriminantal arrangements}\label{projectionsdisc}
Let $\Xi \subset \{1,...,n\}$ be a subset of the set of indices and let
$\SS(\Xi) \subset \SS$ be the subspace of the space of translates of
hyperplanes 
of a general position arrangement $H^0_1,...,H^0_n$ consisting of translates
of hyperplanes with indices in $\Xi$.  Let us consider the projection $p_{\Xi}: \SS
\rightarrow \SS(\Xi)$ obtained by omitting from a collection  
of translates from $\SS$, the translates of hyperplanes with indices outside of
$\Xi$. The image of a subspace $D_J, J \subset \{1,...,n\}$ is a proper
subspace iff $\Card~J \cap \Xi \ge k+1$ and in fact $p_{\Xi}(D_J)=D_{J
  \cap \Xi}$. In particluar, if $D_J$ is a hyperplane
  i.e. $\Card J=k+1$ then $p_{\Xi}(D_{J})$ is a hyperplane if and only
  if $J \subset \Xi$. 

The maps $p_{\Xi}$ restricted to the complement to
  the discriminantal arrangement $\SS\setminus \B(n,k,\A)$ for $n \ge k+3$ are locally trivial fibrations if and only if
$k=1$.  Due to their local triviality they play a prominent role in the study of braid
arrangements (cf. \cite{Birman}). The failure of local
  triviality for $k \ge 2$ can be seen as follows. Consider, for example, the simplest
  case $k=2$. Let $\A=\{l^0_1,..,l^0_4,l^0_5\}$ be a quintuple of lines in $\CC^2$
  and $\Xi=\{1,2,3,4\}\subset \{1,2,3,4,5\}$. The fiber of $p_{\Xi}:
  \CC^5 \rightarrow \CC^4$ at a generic point $\{l_1,....,l_4\}$ in the complement $\CC^4 \setminus \B(4,2,\A \setminus \{l^0_5\})$ is given by all general position arrangements $\{l_1,....,l_4,l_5\}$ such that $l_5$ does not contain any of the 6 intersection points of $l_i \cap l_j$, $1 \leq i<j \leq 4$, that is $\CC$ with deleted 6
 points. On the other hand, one can select a generic point
 $\{l_1,....,l_4\}$ in the complement $\CC^4 \setminus \B(4,2,\A
 \setminus \{l^0_5\})$ such that one of the diagonals of quadrangle
 formed by lines $l_1,...l_4$ will be parallel to $l^0_5$. Hence the
 fiber of $p_{\Xi}$ at such a point will be $\CC$ with only 5 points deleted. Similar special
configurations are inevitable for all $n\ge k+3, k\ge 2$. This failure
of local trivaility brings serious complication in the study of the
topology of the complement $\SS\setminus \B(n,k,\A)$ (see the last
section for a description of the fundamental groups).

Note, that some recent works (see for example \cite{FSV}) refer
to discriminantal arrangements 
in a more narrow sense than used in this paper i.e. 
as the restriction arrangements to the fibers of $p_{\Xi}$  
given explicitly as  
\begin{equation}
    p_{\{1,...,l\}}^{-1}(t_1,...,t_l)= \{(z_1,...,z_{n-l}) \vert z_i =z_j \ \ {\rm or}  \ \  z_i = t_k,
       \    k=1,...,l, \ i,j=1,...,n-l \} 
\end{equation}

\section{Codimension two strata having multiplicity 3}\label{mainsection}

In this section we describe necessary and sufficient
conditions which should be satisfied by the trace at infinity $\A_{\infty}$ in order  that the
corresponding discriminantal arrangement will have codimenion two
strata 
having multiplicity 3. We shall start with the list of
notations used throughout this section, some already introduced in the
last section.

\begin{notation}\label{notation1} Let's fix the following notations.
\begin{itemize}

\item $\A^0$ is a general position arrangement of $n$ hyperplanes in
$\CC^k$ ( we use $\A^0$ for the fixed arrangement to distinguish it from $\A$ which will denote a general translate of $\A^0$), 
\item for each $K$ subset of $\{1,...,n\}$ of $\Card~K=k+1$,
 $D_{K} \subset \CC^n$ will denote the hyperplane in 
$\B(n,k,\A^0)$ corresponding to the subset $K$. 
\item As in  subsection \ref{discrimsubsection}, hyperplanes 
in the trace at infinity  $\A_{\infty}$  are denoted by $H_{\infty,i}$.
\item Let $s \ge 2$.  $K_i,i=1,2,3$, denote subsets of $\{1,...,n\}$ such that 
$\Card~K_i=2s, \Card~K_i\cap K_j=s$, $i \ne j$, $\bigcap_{i=1}^{i=3} K_i=\emptyset$
(in particular $\Card~\bigcup K_i=3s$). 
\end{itemize}
\end{notation}

\begin{lemma}\label{lem:case3} 
Let $s \ge 2$, $n=3s, k=2s-1$. Let $\A^0$ be a general position arrangement of $n$ 
hyperplanes in $\CC^k$ and let $K_i,i=1,2,3$ be a triple of subsets of
$\{1,...,n\}$ as described in notations \ref{notation1} above. Consider the triple of codimension $s$ 
{\bf subspaces} of the
hyperplane at infinity $H_{\infty}$
defined as follows:  $H_{\infty,i,j}= \cap_{ s \in K_i \cap K_j} H_{\infty,s} \cap H_{\infty}, i \ne j$.
If subspaces $H_{\infty,i,j} \subset H_{\infty}$ span a proper subspace 
in $H_{\infty}$ then $\Codim~\bigcap D_{K_i}=2$. Otherwise this codimension is 
equal to 3.
\end{lemma}

This lemma suggests the following:

\begin{definition}\label{depend} A general position arrangement in $\PP^{2s-2}, s \ge 2$, 
is called {\bf dependent} 
if it is composed of $3s$ hyperplanes
  $W_i$ which can be  
partitioned into 3 groups, each
containing $s$ hyperplanes, such
that 3 subspaces of dimension $s-2$ , each being intersection 
of hyperplanes in one group, span a proper subspace in $\PP^{2s-2}$. 
We call these three $s-2$-dimensional subspaces {\bf dependent}.
\end{definition}

Remark that, with this terminology, the assumption of Lemma
  \ref{lem:case3} is that the trace at infinity of $\A^0$ is a dependent general position arrangement.

If $s=2$ in Lemma \ref{lem:case3}, then $H_{\infty,i,j}$ are points in
the $2$ dimensional space $\PP^2$. The condition that these points span a proper
subspace in $H_{\infty}$, i.e., are collinear, corresponds to
the case of Falk's example of the special discriminantal arrangement in \cite{Falk}. 
We shall illustrate the argument in Lemma \ref{lem:case3} by
a discussion of this particular case since the argument for the proof of this lemma is a
generalization of the argument used in Example \ref{6lines}.
\begin{example}\label{6lines} 
Let us consider the case $n=6$ and $k=3$, that is a general position arrangement
$\Cal{A}^0=\{H^0_1,\ldots,H_6^0\}$ in $\CC^3$.  In Lemma
\ref{lem:case3}, this corresponds to $s=2$ and, after possible
relabelling,  $K_1=(1,2,3,4),K_2=(3,4,5,6),K_3=(1,2,5,6)$.  Then
subspaces $L_{i,j}=\bigcap_{s \in K_i\cap K_j}H_s^0$ are lines
$L^0_{1,3}=H_1^0 \cap H_2^0,L^0_{1,2}=H_3^0 \cap H_4^0,L^0_{2,3}=H_5^0\cap
H_6^0$ with closures $\bar L_{i,j}^0$.   In this case, (i.e., when $\dim H_{\infty}=2$), the assertion of 
Lemma \ref{lem:case3} is that the points $H_{\infty,i,j}=\bar L^0_{i,j}
\cap H_{\infty}$ span a line $l$ in $H_{\infty}$. In other words, the points 
$H_{\infty,i,j}$ are collinear if and only if $\codim_{i=1,2,3} \bigcap D_{K_i}=2$ (see Figure \ref{fig:case1}).\\
\begin{figure}[t] 
\centering
\includegraphics[scale=0.4]{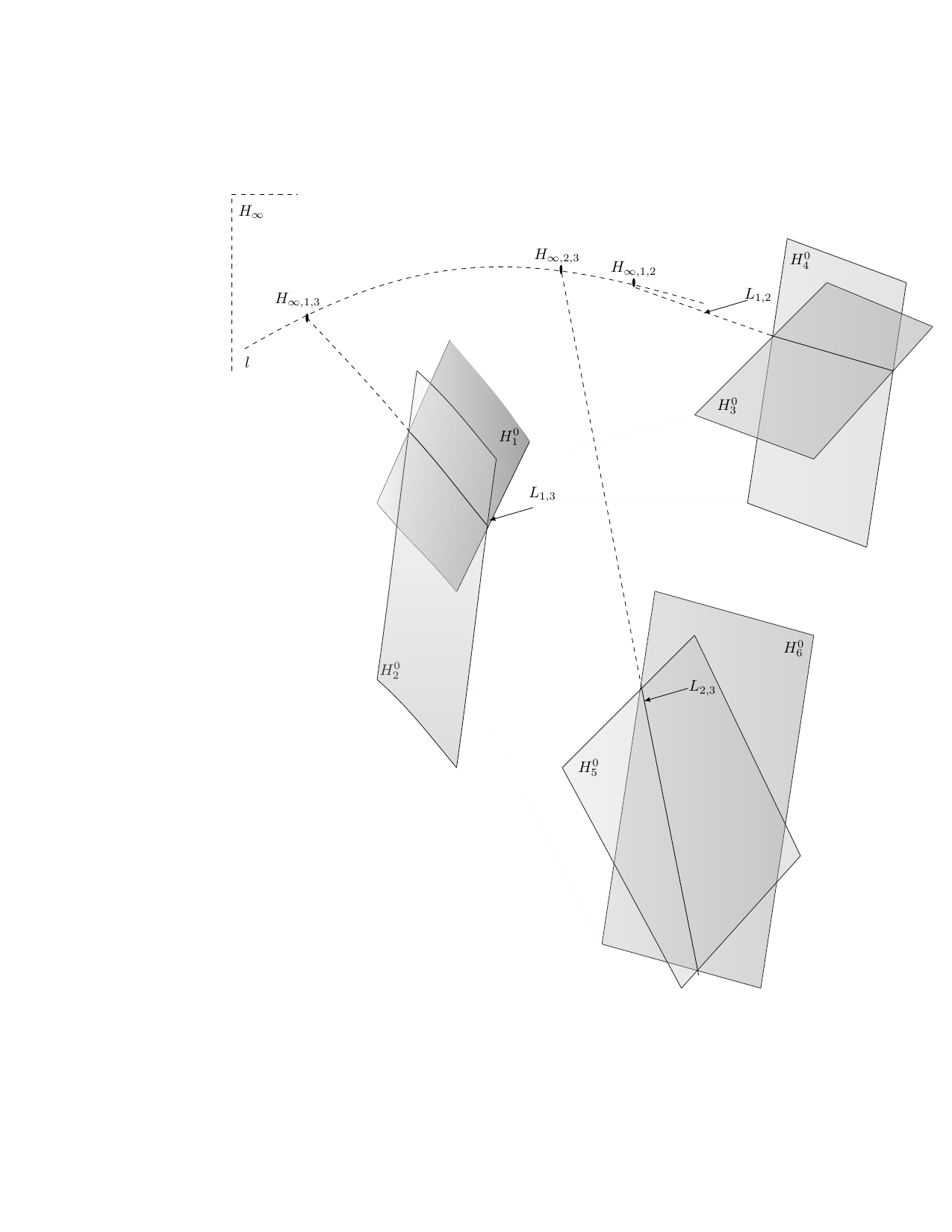}
\caption{\small }\label{fig:case1}
\end{figure}
\indent Indeed, an arrangement $\Cal{A}=\{H_1,\ldots,H_6\}$ of  translates of
planes in $\Cal{A}^0$ is a point in $D_{K_1}\cap D_{K_2}$ iff
pairwise intersections $L_{1,3}\cap L_{1,2}$ and $L_{1,2}\cap L_{2,3}$
in $\CC^3$ 
of lines $L_{1,3}=H_1 \cap H_2, L_{1,2}=H_3 \cap
H_4$ and  $L_{2,3}=H_5 \cap H_6$ are non-empty.
We claim that the collinearity condition implies that 
two pairs of these three lines $L_{i,j}$ are coplanar if and only if all the three are. Indeed, since
$\Cal{A}$ 
consists of translates of planes in $\Cal{A}^0$ the line $L_{i,j}$ has
the same point at infinity $H_{\infty,i,j}$ as does the line
$L^0_{i,j}$. The condition
that  $H_{\infty,i,j}$ span a line $l\in H_{\infty}$ implies that
the closure of any
plane containing two lines $L_{i,j}$ intersects $H_{\infty}$ in $l$.
That is two planes containing respectively the pairs of lines
$L_{1,3},L_{1,2}$ 
and $L_{1,2},L_{2,3}$
are
coincident. This implies that lines $L_{1,3}$ and $L_{2,3}$ have
a non-empty intersection i.e. $\bigcap_{i=1,2,5,6} H_i\ne \emptyset$ and 
hence $\A \in D_{K_3}$.

Vice versa, if the points $H_{\infty,i,j}$ aren't colinear, then it is possible to find configurations in which, for example,  $L_{1,3}$ intersects both $L_{1,2}$ and $L_{2,3}$, but $L_{1,2} \cap L_{2,3} = \emptyset$, i.e. $\Cal{A} \in D_{K_1} \cap D_{K_3}$ and $\Cal{A} \notin D_{K_2}$.
\end{example}

\begin{proof}[\textbf{Proof of Lemma \ref{lem:case3}}] Consider first the case when subspaces 
$H_{\infty,i,j}$ span a proper hyperplane in $H_{\infty}$ 
which we shall denote $\H$.
Note that 
$\dim~H_{\infty,i,j}=s-2$ and,  
as a consequence of $\A^0$ being in the general position, these subspaces do not 
intersect. In particular, the subspace which they span has a dimension greater than
$2s-4$, i.e., either it is a hyperplane or it is all the space $H_{\infty}$.

Let $\A=\{ H_i\}$ be an arrangement in $\CC^k=\CC^{2s-1}$ which belongs to 
$D_{K_1}$ and $D_{K_2}$ (recall that hyperplanes $H_i$ are translates of
hyperplanes $H_i^0 \in \A^0$ ).  Hence $\bigcap_{i \in K_1} H_i \ne \emptyset$ and
$\bigcap_{i \in K_2} H_i \ne \emptyset$. We claim that 
$\bigcap_{i \in K_3} H_i \ne \emptyset$, which would imply 
that $\Codim~\bigcap_{i=1,2,3} D_{K_i}=\Codim~\bigcap_{i=1,2}D_{K_i}=2$.
Let $L_{i,j}=\bigcap_{s \in K_i\cap K_j, }  H_s, (i<j)$.
Note that the codimension of each linear subspace $L_{i,j}$ of $\CC^{2s-1}$ 
is equal to $s$ and $L_{i,j}\cap H_{\infty}=H_{\infty,i,j}$.

Since $\A \in D_{K_1}$, the subspaces $L_{1,2}$ and $L_{1,3}$ 
have a non-empty intersection. Therefore they 
span in $\CC^{2s-1}$ a hyperplane which we denote as $\L_1$. The intersection of $\L_1$ 
with $H_{\infty}$ is the hyperplane $\H$ spanned 
by $H_{\infty,1,2}$ and $H_{\infty,1,3}$.
The hyperplane $\L_1$ is spanned by the intersection 
point $L_{1,2} \cap L_{1,3}$ and the hyperplane $\H$. 

Similarly, since $\A \in D_{K_2}$, both $L_{1,2}$ and $L_{2,3}$ 
have a point in common, they span the hyperplane $\L_2$
spanned by this point and the above hyperplane $\H$ which can be described 
as the plane spanned by $H_{\infty,1,2}$ and $H_{\infty,1,3}$.
Both hyperplanes $\L_1$ and $\L_2$ contain $L_{1,2}$ and $\H$.
Hence they coincide. Therefore $L_{1,3}$ and $L_{2,3}$, 
being both $(s-1)$-dimensional subspaces in $\L_1=\L_2, \dim \L_1=\dim \L_2=2s-2$, must have a point in common 
and hence $\A \in \bigcap_{i=1,2,3} D_{K_i}$.

Now assume that the triple $H_{\infty,i,j}$ spans $H_{\infty}$.
Let   $\A \in D_{K_1}\cap D_{K_2}$ be sufficiently generic in this space.
We show that it does not belong to $D_{K_3}$. Consider 
the family of $s$ codimensional subspaces in $\CC^{2s-1}$ which compactification 
intersects the hyperplane at infinity at $H_{\infty,2,3}$.
The selection of $\A$ determines subspaces $L_{1,2},L_{1,3} \subset \CC^{2s-1}$ which have 
a common point and moreover the subspace $L_{2,3}$ which intersects $L_{1,2}$.
Since triple $H_{\infty,i,j}$ is {\it not} in a hyperplane in
$H_{\infty}$, $\PP^{2s-1}$, compactifying $\CC^{2s-1}$ is spanned by 
$H_{\infty,2,3}$ and the closures of subspaces $L_{1,2},L_{1,3}$.
Hence the generic subspace $L$ of codimension $s$ containing $H_{\infty,2,3}$ and intersecting 
$L_{1,2}$ will have an empty intersection with $L_{1,3}$. The corresponding arrangement 
$\A'$ having $L$ as the subspace $L_{2,3}$ will not belong to $D_{K_3}$ but will be in 
$D_{K_1}\cap D_{K_2}$. This shows that $D_{K_1}\cap D_{K_2} \notin D_{K_3}$.
\end{proof}

Let's briefly recall here the basic notion of the  restriction of an arrangement. For a subset $\A' \subseteq \A$, let us denote by $X_{\A'}=\bigcap_{H
\in \A'} H$ the intersection of its hyperplanes. The arrangement
\begin{equation}\label{eq:depen}
\A^{X_{\A'}}=\{H \cap X_{\A'} \mid H \in \A \setminus \A', H \cap X_{\A'} \neq \emptyset\}
\end{equation}
is called \textbf{a restriction} of $\A$ to  $X_{\A'}$. Restrictions of $\A$ are in one-to-one
correspondence with the  
splits $\A=\A'\bigcup \A''$ of the set of hyperplanes in $\A$ into 
a disjoint union. 
If $\A'=\emptyset$, then the restriction arrangement
coincides with $\A$.

Via the restriction of arrangements, the Lemma \ref{lem:case3} leads to
other examples of discriminantal arrangements having codimension two
strata with multiplicity 3.

\begin{lemma}\label{lem:rest}  
Let $\A^0$ be a general position arrangement of
  $n$ hyperplanes in $\CC^k$ and $\A'$ be a subarrangement of $t$
  hyperplanes in $\A^0$.
Assume that the trace at infinity of the restriction $\A^{X_{\A'}}$ of $\A^0$ to 
$X_{\A'}$ is a dependent arrangement of $3s=n-t$ hyperplanes (in the sense of Def. \ref{depend}).
Then $\B(n,k,\A)$ admits a codimension two stratum of
multiplicity 3.
\end{lemma}

\begin{proof} 
Assume that $\A'=\{H^0_1, ... ,H^0_t\} \subset \A^0=\{H^0_1, ... ,
H^0_n\}$ satisfies the conditions of lemma, i.e., the restriction $\A^{X_{\A'}}$ is
an arrangement of $3s=n-t$ hyperplanes in $X_{\A'} \simeq \CC^{n-t}$
and its trace at infinity $\A_{\infty}^{X_{\A'}}$ is dependent, i.e., the discriminantal 
arrangement $\B(n-t,k-t,\A^{X_{\A'}})$ admits a codimension 2 stratum
having the multiplicity 3. 
The dimension of this stratum is $3s-2$ where $n-t=3s$ and $k-t=2s-1$.  
By Lemma \ref{lem:case3}, there are subsets $K_i,i=1,2,3$, $\Card~K_i=2s=k-t+1$ of $\{t+1,...,n\}$ such that $D_{K_i} \in \B(n-t,k-t,\A^{X_{\A'}})$ satisfy $\Codim~\bigcap_{i=1,2,3}
D_{K_i}=2$. The above $3s-2$ dimensional stratum 
of the discriminantal arrangement of $n-t$ hyperplanes in $\CC^{k-t}$ 
is the transversal intersection of two submanifolds (each being an
open subset of a linear
subspace) of $\CC^n$. One 
is the stratum of discriminantal arrangement
$\B(n,k,\A^0)$  having the dimension  $3s-2+t$ formed by hyperplanes 
 $D_{K_i \cup  \{1,...,t\}}, i=1,2,3$, and another is the intersection of
 $t$ hyperplanes in
 $\SS(H_1^0,...,H^0_n)$ defined by the vanishing of coordinates corresponding
 to $H_1^0,...,H_t^0$.
 Hence the multiplicity of this stratum of $\B(n,k,\A^0)$ equals 3. This
 yields the lemma.
\end{proof}

\begin{corollary}\label{inequality}
 If $k \ge 3$ and $n \ge k+3$, then there exists a
  general position arrangement of $n$ hyperplanes in $\CC^k$
  such that the corresponding discriminantal arrangement admits 
a codimension two stratum of multiplicity 3. 
\end{corollary}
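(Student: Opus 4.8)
The plan is to reduce the general case to the smallest one, $n=k+3$, which is handled directly by Lemma \ref{lem:rest}, and then to propagate the resulting multiplicity $3$ stratum to larger $n$ by adjoining hyperplanes in general position, controlling incidences through the coordinate projections of Section \ref{projectionsdisc}.

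First I would settle the base case $n_0=k+3$. Setting $s=2$ and $t=k-3$ one checks $3s=6=n_0-t$, and that the restriction of a trace at infinity to an intersection of $t$ of its hyperplanes lies in $\PP^{k-1-t}=\PP^{2}=\PP^{2s-2}$; since $k\ge 3$ one has $t\ge 0$. Thus a generic arrangement whose trace at infinity becomes, after this restriction, a \textbf{dependent} configuration of six lines in $\PP^2$ of the Falk type of Example \ref{6lines} (such configurations plainly exist) satisfies the hypotheses of Lemma \ref{lem:rest}. That lemma then yields three index sets $K_1,K_2,K_3\subset[1,\dots,n_0]$ of cardinality $k+1$ for which $\sigma:=D_{K_1}\cap D_{K_2}\cap D_{K_3}$ is a codimension two stratum of $\B(n_0,k)$ of multiplicity exactly $3$.

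For arbitrary $n\ge k+3$ I would choose a generic arrangement $\A^0$ of $n$ hyperplanes in $\CC^k$ whose first $n_0=k+3$ hyperplanes realise the dependent configuration above and whose remaining $n-n_0$ hyperplanes are in general position with everything else; the special collinearity is compatible with normal crossing, so such a generic $\A^0$ exists. Put $\Xi=[1,\dots,n_0]$ and let $p_\Xi:\SS\to\SS(\Xi)$ be the corresponding coordinate projection. Since each $K_i\subset\Xi$, equation (\ref{hypeq}) shows $D_{K_i}=p_\Xi^{-1}(\bar D_{K_i})$, whence $D_{K_1}\cap D_{K_2}\cap D_{K_3}=p_\Xi^{-1}(\sigma)$ remains of codimension two in $\SS$, the surjective linear map $p_\Xi$ preserving codimension. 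It then suffices to verify that no further hyperplane $D_{K'}$ of $\B(n,k)$ contains this stratum, so that its multiplicity stays equal to $3$.

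This multiplicity count is the only real point. For $K'\subset\Xi$ it is exactly the base case: $\sigma$ has multiplicity $3$ in $\B(n_0,k)$, so $\bar D_{K'}\not\supset\sigma$ and hence $D_{K'}=p_\Xi^{-1}(\bar D_{K'})$ does not contain $p_\Xi^{-1}(\sigma)$. For $K'$ containing some index $j_0\notin\Xi$, the linear equation (\ref{hypeq}) of $D_{K'}$ has, up to sign, a nonzero $k\times k$ minor of the coefficient matrix as its coefficient on $x_{j_0}$, by genericity; restricted to a fibre of $p_\Xi$ over a point of $\sigma$ it is then a nonconstant affine function of $x_{j_0}$, so $D_{K'}$ meets but does not contain that fibre, and a fortiori does not contain $p_\Xi^{-1}(\sigma)$. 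Hence exactly the three hyperplanes $D_{K_i}$ pass through the generic point of $p_\Xi^{-1}(\sigma)$, producing a codimension two stratum of multiplicity $3$ for every $n\ge k+3$. The main obstacle is precisely this incidence count for the padding indices; phrased through the fibrewise nonconstancy of (\ref{hypeq}), it becomes routine.
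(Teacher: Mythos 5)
Your proposal is correct and follows essentially the same route as the paper: realize the base case $n_0=k+3$ via Lemma \ref{lem:rest} with $s=2$, $t=k-3$ (a dependent six-line configuration in $\PP^2$), then pass to arbitrary $n\ge k+3$ by adjoining sufficiently generic hyperplanes. Your explicit check, via the nonvanishing $x_{j_0}$-coefficient in (\ref{hypeq}), that the new hyperplanes $D_{K'}$ do not contain the stratum is a welcome elaboration of a step the paper only asserts ("adding sufficiently generic hyperplanes yields a subarrangement, hence the stratum persists").
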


\begin{proof} To apply Lemma \ref{lem:rest}, for a pair $(n,k)$ such
  that there exist integers $t \ge 0, s
  \ge 2$ satisfying 
\begin{equation}\label{dep:rel}
n=3s+t \ \ \ \ \ k=2s-1+t
\end{equation} 
  consider a general position
  arrangement $\A^0$ of $n$ hyperplanes in
  $\CC^k$ such that the restriction of trace $\A_{\infty}$ of $\A^0$ on intersection of its $t$
  hyperplanes is dependent. By Lemma \ref{lem:rest}, the discriminantal
  arrangement corresponding to such $\A^0$ will admit the required stratum. Given $(n,k)\in
\NN^2$, the relation (\ref{dep:rel}) has a unique solution
$s=n-k-1,t=3k-2n+3$ which satisfies $s\ge 2, t\ge 0$ iff
\begin{equation}\label{formulainequality}
k+3 \le n \le {3 \over 2}(k+1), \  \ \ \ k\ge 3 \quad .
\end{equation}
Note that given an arrangement $\B(n,k,\A)$ admitting the codimension 2
strata of multiplicity 3, an extension of $\A$ to the arrangement of $N
\ge n$ hyperplanes by adding sufficiently generic hyperplanes yields
an arrangement $\A'$ such that  $\B(n,k,\A)$ is the
intersection of $\B(N,k,\A')$ and the coordinate subspace.
It follows that $\B(N,k,\A')$ admits strata of codimension 2
and the multiplicity 3 as well.
On the other hand, for $n=k+2$, $\B(k+2,k,\A)$ has only one
stratum of multiplicity $k+2$ i.e., the inequality $n \ne k+3$ is
sharp.  
\end{proof}

The following example illustrates the above two lemmas.

\begin{example} Let $\A_{\infty}$  be a general position arrangement of $8$ hyperplanes $H_{\infty,i}$ in
  $\PP^4$ and $\A_{\infty}^X$ its restriction to the plane $X=H_{\infty,7}\cap H_{\infty,8}$. The restricted arrangement
$\A_{\infty}^X$ is an arrangement in general position since
  $\A_{\infty}$ is in general position.

 Assume that the double points 
 $H_{\infty,1}\cap H_{\infty,2}\cap H_{\infty,7}\cap   H_{\infty,8}$,  
$H_{\infty,3}\cap H_{\infty,4}\cap H_{\infty,7}\cap   H_{\infty,8}$,
$H_{\infty,5}\cap H_{\infty,6}\cap H_{\infty,7}\cap   H_{\infty,8}$
are co-linear.

Consider the hyperplanes $$D_{1,2,3,4,7,8},D_{3,4,5,6,7,8},D_{1,2,5,6,7,8}$$
in a discriminantal arrangement $\B(8,5,\A)$
corresponding to such $\A_{\infty}$ and the hyperplanes
$D_{1,2,3,4}',D_{3,4,5,6}',D_{1,2,5,6}'$ in the discriminantal arrangement 
in 3-space $H_7\cap H_8$ for a generic choice of hyperplanes $H_7,H_8$ 
intersecting the hyperplane at infinity at $H_{\infty,7},H_{\infty,8}$ respectively.
Then the arrangement $\A$ of $8$ hyperplanes in $\CC^5$ including $H_7,H_8$ 
 has a common point if and only if the arrangement of $6$ planes in
 3-space 
$H_7\cap H_8$ has a common point. Hence 
\begin{equation}\dim D_{1,2,3,4,7,8}\cap D_{3,4,5,6,7,8} \cap D_{1,2,5,6,7,8}=
2+\dim D_{1,2,3,4}'\cap D_{3,4,5,6}'\cap D_{1,2,5,6}'=6
\end{equation}
(the last equality uses the Example \ref{6lines}).
Hence the discriminantal arrangement $\B(8,5,\A)$ has a codimension two stratum of multiplicity 3.\\
This case illustrates the case considered in Theorem \ref{proofmain} (2)
  below, corresponding to the dependent restriction arrangement of $\A_{\infty}$ given by hyperplanes $H_{\infty,i} \cap H_{\infty,7}\cap   H_{\infty,8}$, $i=1,\ldots,6$ and $s=2$.

\end{example}

The next Lemma will be useful in the proof  below showing the absence of codimension 2 strata 
having  the multiplicity 4.

\noindent 
\begin{lemma}\label{quadruples} For $s \ge 2$, there is no quadruple of subspaces $V_i \subset
  \PP^{3s-2}, i=1,2,3,4$ having dimension $2s-2$ such that intersections
$P_{i,j}=V_i \cap
  V_j,i \ne j$
satisfy 

 a) each $P_{i,j}$ has 
  dimension $s-2$ 

b) any pair $P_{i,j},P_{i,k}, i \ne j \ne k \ne i$ spans a hyperplane in
$V_i$, and 

c) all three,  $P_{i,j},P_{i,k},P_{i,l}$, belong to a hyperplane in $V_i$.

\end{lemma}

\begin{proof} We shall start with the case $s=2$. Assume that
 a configuration as in Lemma \ref{quadruples} does exist and consider a quadruple
  of planes $V_i, i=1,...,4$ in
  $\PP^4$ such that 

a) any two intersect at a single point, 

b) all 6 points $P_{i,j}=V_i \cap V_j$, $i \neq j$,  obtained in this way are distinct, and 

c) all three points, $P_{i,j},P_{i,k},P_{i,l}$, are colinear, i.e., span a
line $L_i$.

For a fixed $k$, the triple of points $P_{i,j}$,$i, j  \ne k$, outside
of $V_k$, determines the triple of
  lines $L_i \subset V_i, i \ne k$ spanned by points
  $P_{i,j},P_{i,l}$, $i, j , l \ne k$. 
These lines $L_i, i \ne k$ by their definition are pairwise concurrent
($L_i\cap L_j=P_{i,j}$) and hence belong to a plane $H$.
By assumption c), for each $i \ne k$, the three points $P_{k,i}=V_k
\cap V_i$ are points on lines $L_i$ distinct from $P_{i,j},P_{i,l}$. 
%the plane $V_k$ intersects $V_i, i \ne k$ at a point on the line $L_i$ 
%distinct from $P_{i,j},P_{i,k}, i \ne j \ne k \ne i$ i.e. having a common
%point with $H$ on the line $L_i$. 
Hence $H$ and $V_k$ have 3 distinct non-colinear points in common
and therefore  $H=V_k$, but this contradicts 
$\dim~V_k \cap V_i=0$. 

Now consider the case $s>2$. Similarly to the above, $s-2$ dimensional 
subspaces $P_{i,j}=V_i \cap
V_j$ of $\PP^{3s-2}$ determine the subspaces $L_i\subset
V_i, i \ne k$ (for a fixed $k$) each being spanned by pairs $P_{i,j}, P_{i,l},i,j,l \ne
k$ which are outside of $V_k$. Each $L_{i}$ is a hyperplane in $V_i$ 
(i.e. $\dim~L_i=2s-3$).
Moreover, the dimension of the subspace $H$ of $\PP^{3s-2}$ spanned by 
$L_{i,j},L_{i,l}, i,j,l \ne k$, is $3s-4$. The subspace $H$ can be described as the
subspace 
of $\PP^{3s-2}$ spanned by triple of subspaces $P_{i,j}, i,j \ne k$.
Now by our assumption c), $V_k$ contains an $s-2$ dimensional 
subspace of $L_{i,j}, i,j \ne k$, i.e., $P_{i,k}$. The subspace hence is also a
subspace of $H$. This implies that
$V_k \subset H$. The dimension of intersection $L_{i,j}$ and $V_k$, 
which are both subspaces of $H$, is equal to $(2s-3)+(2s-2)-(3s-4)=s-1$ 
and hence $\dim~V_i\cap V_k=s-1$. This is a contradiction. 
\end{proof}

Now we are ready for the main result of this section. It describes
the codimension 2 strata of discriminantal arrangements having
the multiplicity 3 and shows an absence of codimension 2 strata
having the  multiplicity 4 (with obvious exceptions). 

\begin{theorem}\label{proofmain}

Let $\A_{\infty}$ be a general position arrangement of  hyperplanes 
in $\PP^{k-1}$ which is the trace at infinity of a general position arrangement $\A^0$ in $\C^k$. 

1. The arrangement $\B(n,k,\A^0)$ has 
$n \choose k+2$ codimension 2 strata of multiplicity $k+2$.

2. There is a one-to-one correspondence between

a) the dependent restrictions of subarrangements of $\A_{\infty}$,   and  

b) triples of hyperplanes in $\B(n,k,\A^0)$ for which the codimension 
of their intersection is equal to 2.

3. There are no codimension 2 strata having the multiplicity 4 unless
$k=2$. All codimension 2 strata of  $\B(n,k,\A^0)$ not 
mentioned  in part 1, have a multiplicity which is either
2 or 3 (the latter corresponding to triples of hyperplanes in b).

4. The codimension 2 strata of $\B(n,2,\A^0)$ is independent of $\A^0$.
\end{theorem}

\begin{proof}
The statement (1) 
follows immediately from the discussion after 
(\ref{eq1:inter}) in section \ref{hyperplanessec}.
If $J \subset \{1,...,n\}$ is a subset of cardinality $k+2$, 
then $D_J$ is a codimension 2 subspace in $\CC^n$
and belongs to $k+2$ hyperplanes $D_K, K \subset J$.

Next we shall determine the conditions on three different sets of
$k+1$ indices  
under which 
$\Codim~D_{K_i}\cap D_{K_j} \cap D_{K_l}=2$.

Consider first the case when sets $K_i,K_j,K_l$, each having
the cardinality $k+1$, are such that 
for one of them,  say $K_i$, one has 
$K_i\setminus (K_i\cap (K_j \cup K_l))\ne \emptyset$, 
i.e., one of the set in this triple is not in the union of other two.
If $r \in K_i\setminus (K_i\cap (K_j \cup
K_l))$, 
then the hyperplanes in an arrangement $\A \in D_{K_j}\cap D_{K_l}$ with indices different
from the indices in $K_j\cup K_l$ 
can be chosen as arbitrary parallel translates of hyperplanes in
$\A^0$,
while $H_r \in \A', \A' \in D_{K_i}$ is fixed by condition $\A'$ being
in $D_{K_i}$ and the selection of hyperplanes with indices different from
$r$ but in $K_i$. 
Hence $D_{K_i}\cap D_{K_j}\cap D_{K_l} \ne D_{K_j}\cap D_{K_l} $, i.e., 
$\Codim~D_{K_i}\cap D_{K_j}\cap D_{K_l}=3$.

Now let us consider the alternative to the case considered in the
previous paragraph. Hence we have a triple $K_i,K_j,K_l$ such that
\begin{equation}\label{reducedK_i}
K_i=(K_i \cap
K_j) \bigcup (K_i\cap K_l)
\end{equation} 
for any permutation of $(i,j,k)$.  Condition (\ref{reducedK_i}) for $k=2$ implies that either  $\Card (K_i \cap
K_j)=2$ or $\Card (K_i\cap K_l)=2$ that is either $D_{K_i}\cap D_{K_j}=D_{K_i \cup
K_j}$ or respectively $D_{K_i} \cap D_{K_l}=D_{K_i \cup
K_l}$. Since this imples that $\Card(K_i\cup K_j)=4$ (resp. $\Card
(K_i\cup K_l)=4$), we obtain that $D_{K_i}\cap D_{K_j}$
(resp. $D_{K_i}\cap D_{K_l}$) is a codimension $2$ subspace of multiplicity $4=k+2$ and  part (4) follows.

Let $L_{\alpha,\beta}=(K_{\alpha} \cap K_{\beta})\setminus 
\bigcap_{s=i,j,k} K_{s},
t=\Card~\bigcap_{\alpha=i,j,k} K_{\alpha}, l_{\alpha,\beta}=\Card~L_{\alpha,\beta}$.
Then (\ref{reducedK_i}) implies that $K_{\beta}\setminus 
\bigcap_{\alpha=i,j,k} K_{\alpha}
=L_{\alpha,\beta} \bigcup L_{\beta,\gamma}$ and since $\Card~K_i=k+1$
we have 
\begin{equation}\label{sizeofintersections}
l_{\alpha,\beta}+l_{\beta,\gamma}+t=k+1, \ \ \ \alpha \ne \beta \ne
\gamma
\end{equation}
Using these relations for allowable permutations of subscripts, yields:
\begin{equation}
l_{\alpha,\beta}={{k+1-t} \over 2}    \ \ \ \alpha \ne \beta,
\alpha,\beta \in \{i,j,k\}
\end{equation}

For a triple of subsets $K_i,K_j,K_l, \Card~K_i\cap K_j \cap K_l=t$ and a fixed 
arrangement $\A$,
consider the map of the spaces of translates:
$$\SS(H^0_1,...,H^0_n) \rightarrow 
\CC^{t}=\SS(...,H^0_{r},...) \ \ \ r \in K_i\cap K_j \cap K_l$$
which assigns to a collection of $n$  
parallel translates {$H^{t_1}_1,...,H^{t_n}_n$} of $H^0_1,...,H^0_n$ in $\CC^k$, 
the intersections of the hyperplanes with indices outside of 
{$ K_i \cap K_j \cap K_l$} with the linear subspace which is 
the intersection of $t$ hyperplanes with indices in
 {$ K_i \cap K_j \cap K_l$}. 

This map has as its fiber over the set of translates
$H_{\beta}^{t_{\beta}}$, the space 
$${\SS(...,H^0_{\alpha} \cap  
(\bigcap_{\beta \in K_i \cap K_j\cap K_l}
H^{t_{\beta}}_{\beta}),...)},  \ \ \ \ \alpha \in [1,...,n]\setminus K_i\cap
K_j \cap K_l$$ 
of translates in the $(k-t)$-dimenional
  space $\bigcap_{\beta \in K_i \cap K_j\cap K_l}
H^{t_{\beta}}_{\beta}$.
If  $s$ is the dimension of the  family of arrangements 
which {is} the intersection of hyperplanes $D_{K_\alpha}, \alpha=i,j,l$ 
then the dimension of the family of restrictions of arrangements to $\CC^{k-t}$ is $s-t$. 
Hence 
\begin{equation}\label{intersection}
\Codim~D_{K_i}\cap D_{K_j}\cap D_{K_l}=
\Codim~D_{K_i\setminus \bigcap K_{\alpha}} \cap D_{K_j \setminus \bigcap K_{\alpha}}
\cap D_{K_l \setminus \bigcap K_{\alpha}} \ \ \ \ \alpha=i,j,l 
\end{equation}
where the intersection on the right is taken 
in the space of parallel translates in 
$\bigcap_{j \in \bigcap K_i} H^0_j$.

Clearly $t<k$, and in the case when $t=k-1$, we have $l_{i,j}=1$, i.e., 
$\Card~\bigcup K_i=k+2$ and we are in the case (1), i.e., the codimension 
2 stratum has the multiplicity $k+2$. If $t=0$, then we have 
the case considered in Lemma \ref{lem:case3} and 
we also see from this lemma that the intersection of $D_{K_i}, i=1,2,3$ has 
a codimension two stratum
if and only if the assumptions of the theorem are fulfilled. 
The rest of the part  (2) of the theorem follows from Lemma
\ref{lem:rest}  applied to the restriction on {$\bigcap_{\alpha \in
  K_i \cap K_j\cap K_l} H^0_{\alpha}$}
and the relation (\ref{intersection}) (with $s=l_{\alpha,\beta}$).

Now consider the existence of a codimension 2 strata of multiplicity 4.
Suppose that such stratum exists and $K_i,i=1,...,4$ are the
corresponding subsets of $\{1,...,n\}$. By the quadruples analog
of restriction (\ref{intersection}), it is enough to consider the case
{$\bigcap_{i=1,...,4} K_i=\emptyset$}. Let $l_{i,j,m}=\Card~K_i \cap K_j
\cap K_m$. Then for any $i$, $\Card~K_i\cup \bigcap_{j \ne i} K_j =\Card~
\bigcup K_i$, i.e., $l_{i,j,m}+k+1$ is independent of $(i,j,m)$.
Hence one infers 
from (\ref{sizeofintersections}) the relation $l_{i,j,m}={{k+1} \over 3}$.

Note that $\Codim~\bigcap_{i=1,...,4} D_{K_i}=2$ if and only if 
 $\Codim~D_{K_{i_1}}\cap D_{K_{i_2}}\cap D_{K_{i_3}}=2$ for all 4 triple $1 \le i_j
 \le 4$ of distinct integers. Applying part (2) of the theorem to each
 triple $i_1,i_2,i_3$,  one infers the existence of a quadruple of
 subspaces as in Lemma \ref{quadruples}. Hence this lemma 
implies part (3).

\end{proof}

\begin{corollary} If  a discriminantal arrangement $\B(n,k,\A)$
satisfies  $n>{3 \over
    2}(k+1)$ and admits a codimension 2 stratum of multiplicity 3, then 
there exists a proper subarrangement $\A' \subset \A$ such that 
$\B(n,k,\A')$ admits a codimension 2 stratum of multiplicity 3.
\end{corollary}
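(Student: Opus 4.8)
The plan is to show that a single codimension $2$ stratum of multiplicity $3$ engages at most $\frac{3}{2}(k+1)$ of the hyperplanes of $\A_{\infty}$, so that once $n>\frac{3}{2}(k+1)$ at least one hyperplane is free to be discarded. By parts (2) and (3) of Theorem \ref{proofmain}, the hypothesized stratum is the intersection of a triple $D_{K_i},D_{K_j},D_{K_l}$ with $\Codim\big(D_{K_i}\cap D_{K_j}\cap D_{K_l}\big)=2$ coming from a triple of dependent subspaces. First I would record the combinatorial data of this triple exactly as in the proof of Theorem \ref{proofmain}: putting $t=\Card\bigcap_{\alpha}K_{\alpha}$ and $l_{\alpha,\beta}=\Card\big((K_{\alpha}\cap K_{\beta})\setminus\bigcap_{\gamma}K_{\gamma}\big)$, relation (\ref{sizeofintersections}) forces all $l_{\alpha,\beta}$ equal to a common value $s$, whence $k+1=2s+t$, while the reduced form (\ref{reducedK_i}) forces the three sets $L_{\alpha,\beta}$ to be pairwise disjoint off $\bigcap_{\gamma}K_{\gamma}$.

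The crux is then the count
\[
\Card\bigcup_{\alpha}K_{\alpha}=t+l_{i,j}+l_{j,l}+l_{i,l}=3s+t.
\]
Since $\frac{3}{2}(k+1)=\frac{3}{2}(2s+t)=3s+\frac{3}{2}t$ and $t\ge 0$, this yields $3s+t\le\frac{3}{2}(k+1)<n$. Hence $\bigcup_{\alpha}K_{\alpha}$ is a proper subset of $[1,\ldots,n]$, and some index $m$ lies outside it.

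I would then set $\A'_{\infty}=\A_{\infty}\setminus\{H_{\infty,m}\}$, a generic proper subarrangement. Since $K_i,K_j,K_l\subseteq[1,\ldots,n]\setminus\{m\}$, the triple of dependent subspaces realizing the stratum is built solely from hyperplanes retained in $\A'_{\infty}$, so it is again a triple of dependent subspaces arising from a restriction of a subarrangement of $\A'_{\infty}$; applying the implication (a)$\Rightarrow$(b) of Theorem \ref{proofmain}(2) to $\A'_{\infty}$ produces a triple of hyperplanes of $\B(n-1,k,\A'_{\infty})$ meeting in codimension $2$, i.e. the required multiplicity $3$ stratum. The main obstacle is to justify this persistence rigorously, namely that deleting $H_{\infty,m}$ keeps $\Codim\big(D_{K_i}\cap D_{K_j}\cap D_{K_l}\big)$ equal to $2$. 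This is exactly what the stratified projection $p_{\Xi}$ of section \ref{projectionsdisc} with $\Xi=[1,\ldots,n]\setminus\{m\}$ delivers, since $p_{\Xi}(D_{K_{\alpha}})=D_{K_{\alpha}\cap\Xi}=D_{K_{\alpha}}$; under this identification the spanning condition of Lemma \ref{lem:case3}, in the restricted form of Lemma \ref{lem:rest}, involves only the surviving hyperplanes and transfers verbatim.
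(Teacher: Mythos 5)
Your proof is correct and takes essentially the same route as the paper's: the paper's own argument is a one-line appeal to Theorem \ref{proofmain} together with inequality (\ref{formulainequality}), i.e.\ the observation that a dependent configuration producing a multiplicity-3 codimension-2 stratum can involve at most $3s+t\le\frac{3}{2}(k+1)<n$ of the hyperplanes, so some hyperplane can be discarded. Your computation of $\Card\bigcup_{\alpha}K_{\alpha}$ and the persistence argument via $p_{\Xi}$ are just the explicit unwinding of that observation.
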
 

\begin{proof} It follows immediately from the above theorem and 
  inequality (\ref{formulainequality}). 
\end{proof}

\subsection{Numerology of singularities of generic plane sections}

 Theorem \ref{proofmain} contains a complete description of combinatorics
of codimension 2 strata of discriminantal arrangements. Indeed, the possible multiplicities of codimension two strata are ${k+2} \choose {k+1}$, 3 and 2.
The number of points of multiplicity 3 is the number of triples of strata satisfying condition \textit{2a)}. 
It is an interesting problem to determine the number of triple points
which $\B(n,k,\A)$ can have. It is clear from Theorem
\ref{proofmain}
that this number can be arbitrary large when $n \rightarrow \infty$,
though even the 
precise asymptotic is not clear.

\section{The Gale transform and codimension two strata}\label{corol}

\subsection{The Gale transform and associated sets.}\label{galeassociated}

In this subsection we shall discuss interpretation of discriminantal
arrangements using the  Gale transform.
Recall the following: 
\begin{definition} Let $V$ be a vector space over $\CC$, $\dim V=k$, $l_i\in V^*,i=1,...,n$, be $n$ vectors in the dual of
  the  vector space $V$ and let 
   \begin{equation}
     0 \rightarrow V \buildrel  L \over \rightarrow \CC^n \rightarrow W \rightarrow 0
\end{equation}  
be the exact sequence in which $L(v)=(l_1(v),...,l_n(v))$. The Gale transform of collection
$l_i$ is the collection $m_i \in W, i=1,...,n$, of images of the vectors $e_i$ of
the standard basis in $\CC^n$.  
  \end{definition}

The following is suggested by an argument in \cite{Falk}
(see also \cite{Perling} and \cite{CLS}).

\begin{proposition}\label{galeprop}
Let $\A$ be a central arrangement of $\Card~\A=n$ in a
$k$-dimensional vector space $V$ such that the corresponding
arrangement in $\PP^{k-1}$ is in the general position. Let $l_i$ be
the elements in $V^*$ corresponding to the hyperplanes in $\A$.  The essential part of the 
discriminantal arrangement  consists of hyperplanes in $W$ spanned by
collections of $n-k-1$ vectors of the Gale transform of vectors $l_i \in V^*$. 
\end{proposition}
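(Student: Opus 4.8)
The plan is to identify the ambient space of the essential part of $\B(n,k)$ with the cokernel $W$ of the Gale sequence, and then to match each hyperplane $D_K$ ($|K|=k+1$) with the linear span of the Gale points $m_i$ indexed by the complement of $K$. First I would make the identification $\SS/\TT\cong W$ explicit. Writing $H_i$ as $l_i(y)=0$, a translate with equation $l_i(y)=x_i$ passes through a fixed point $p\in V$ exactly when $x_i=l_i(p)$; hence the subspace $\TT\subset\SS\cong\CC^n$ of arrangements through a common point is $\{(l_1(p),\ldots,l_n(p)):p\in V\}=\im L$, so that $\SS/\TT=\CC^n/\im L=W$. Since $\A$ is generic the $l_i$ span $V^*$, $L$ is injective, and $\dim W=n-k$; a hyperplane of $W$ is therefore the span of $n-k-1$ independent points, precisely the number of indices in a complement $\{1,\ldots,n\}\setminus K$.

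Next I would read the equation of $D_K$ off from (\ref{hypeq}). Expanding the determinant along its last column produces a linear form $\phi_K=\sum_{i_s\in K}c_s x_{i_s}$ in the coordinates $x_1,\ldots,x_n$, whose coefficient $c_s$ is, up to sign, the $k\times k$ minor of the coefficient matrix with the row $i_s$ deleted. By genericity each such minor is nonzero, so $\phi_K\neq 0$ and its support is exactly $K$. Substituting $x_i=l_i(p)$ turns the last column into a combination of the first $k$ columns, so the determinant vanishes on $\TT=\im L$; thus $\phi_K$ annihilates $\im L$, descends to a nonzero element of $W^*$, and $D_K$ becomes the hyperplane $\ker\phi_K\subset W$.

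Finally I would identify $\ker\phi_K$ with $\mathrm{span}\{m_i:i\notin K\}$. For $i\notin K$ one has $\phi_K(m_i)=\phi_K(e_i)=0$, because $\phi_K$ is supported on $K$ and annihilates $\im L$; hence these $n-k-1$ Gale points all lie in $\ker\phi_K$. To see they are independent, note that a relation $\sum_{i\notin K}c_i m_i=0$ means the vector with entries $c_i$ ($i\notin K$) and $0$ ($i\in K$) lies in $\im L$, i.e. there is $v\in V$ with $l_i(v)=0$ for all $i\in K$ and $l_i(v)=c_i$ otherwise; since $|K|=k+1>k$ and $\A$ is generic, the conditions $l_i(v)=0$, $i\in K$, already force $v=0$, whence all $c_i=0$. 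The $n-k-1$ independent points $m_i$ ($i\notin K$) then span the $(n-k-1)$-dimensional subspace $\ker\phi_K$, which is the claim.

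The step I would handle most carefully is the passage from (\ref{hypeq}) to a well-defined nonzero functional on $W$: both the vanishing of $\phi_K$ on $\TT=\im L$ and the linear independence of the complementary Gale points depend essentially on the genericity of $\A$ (nonvanishing of all $k\times k$ minors), whereas the rest is formal linear algebra on the exact sequence defining the Gale transform. No serious obstacle is expected beyond this bookkeeping.
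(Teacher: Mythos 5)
Your proof is correct and follows essentially the same route as the paper's: both identify the essential quotient $\SS/\TT$ with $W=\CC^n/L(V)$ and show that $D_K$ descends to the image of $\mathrm{span}(e_i : i\notin K)$, i.e.\ the span of the complementary Gale points (the paper phrases this via the projection $\pi_K$ and its kernel rather than via the explicit determinantal functional $\phi_K$). You additionally verify the linear independence of the $n-k-1$ points $m_i$, $i\notin K$, a step the paper leaves implicit.
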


\begin{proof}  Let $f_1,...,f_k$ be  a basis in $V$,
  $x_j, j=1,...,k$, be the coordinates in this basis, and let
  $l_i=\sum a_j^ix_j, j=1,...,k,i=1,...,n$, be the equations of the
 hyperplanes of $\A$. Denote by  $A=\{a^i_j\}$ the corresponding matrix.
Translates of hyperplanes $l_{i_1}=0,...,l_{i_{k+1}}=0$, by
$c_{i_1},...,c_{i_{k+1}}$ respectively, have a non-empty 
intersection if and only if the system of equations $\sum
a_j^{i_s}x_j=c_{i_s}, s={i_1},...,{i_{k+1}}$, has a solution. This takes place
if and only if  the 
projection $\pi_{i_1,...,i_{k+1}}(c)$ of the point $c=(c_1,\ldots,c_n)\in \CC^n$ on the subspace of $\CC^n$ spanned 
by the vectors  $e_{i_1},...,e_{i_{k+1}}$ belongs to the image of projection
$\pi_{{i_1},...,{i_{k+1}}}(L(V))$ of $L(V)$, which is equivalent to 
\begin{equation} \label{inclusion}
c \in
H_{\pi_{{i_1},...,i_{k+1}}}\simeq Span(V,Ker_{\pi_{{i_1},...,i_{k+1}}})
\end{equation} 
(here by an abuse of notation, we identified $V$ with its image $L(V)$ in $\CC^n$).
The hyperplanes $H_{\pi_{{i_1},...,i_{k+1}}}$
form the 
discriminantal arrangement
in $\CC^n$ and the relation $V \subset
H_{\pi_{{i_1},...,i_{k+1}}}$ shows that 
the essential part of discriminantal arrangement 
is its restriction to $W=\CC^n/V$.
The inclusion (\ref{inclusion}) is equivalent  to $c \in Ker _{\pi_{{i_1},...,i_{k+1}}} mod V$. 
The image  $Ker _{\pi_{{i_1},...,i_{k+1}}} \in \CC^n/V=W$ is
spanned by the images of the Gale transform of $l_i, i=1,...,n$, and is the
hyperplane in the essential part of discriminantal arrangement.
\end{proof}

Next recall the classical notion of associated sets (cf. \cite{DO},
Ch.III):
\begin{definition} Let $V$,$W$ be vector spaces such that $\dim V=k, \dim
  W=n-k$. Let $f_1,...,f_k$ and $g_1,...,g_{n-k}$ be the bases of $V,W$ respectively. 
The set of vectors $l_1,...,l_n$ in  $V$ and $m_1,...,m_n$ in $W$ are called 
associated if the matrices $X$ and $Y$ of coordinates of
$l_i,i=1,...,n$, and $m_j,j=1,...,n$, satisfy:
\begin{equation}
                        X \cdot \Lambda \cdot ^tY=0
\end{equation}
\noindent where $\Lambda$ is a diagonal matrix. 
 \end{definition}
The sets in $V$ and $W$ are associated if and only if one is the Gale 
transform of another (see the discussion in \cite{DO} p.33
where the association is discussed in projective setting, for example).

\subsection{Discriminantal arrangements of planes in $\CC^3$ and the Gale transform}\label{cubics}

One can ask for the meaning of a codimension 2 strata with multiplicity three 
in discriminantal arrangements described 
in the Theorem \ref{proofmain} in terms of the Gale transform.
In the case, $n=6,k=3$, one has a geometric interpretation
(see \cite{DO} for geometric interpretations for some other values
$n,k$) of the Gale transform which allows one to show the following: 
\begin{proposition}\label{gale}
 The existence of a partition of 6-tuples of points in
  $\PP^2$ 
into 3 pairs, each pair defining a line, and such that these lines are concurrent lines, is an invariant of
the Gale transform.
\end{proposition}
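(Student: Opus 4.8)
The plan is to derive the statement from Theorem \ref{proofmain} together with the Proposition describing the essential part of a discriminantal arrangement via the Gale transform, by reading the existence of a codimension $2$ stratum of multiplicity $3$ in $\B(6,3)$ off of the two configurations in two different ways and matching them. First I would fix notation: let the $6$ points be $[l_1],\dots,[l_6]\in\PP(V^*)=\PP^2$ for linear forms $l_i$ on $V$ with $\dim V=3$, giving the central generic arrangement of hyperplanes $\{l_i=0\}$ in $V$, and let $[m_1],\dots,[m_6]\in\PP(W)=\PP^2$, $W=\CC^6/L(V)$, be its Gale transform. I would observe that the trace at infinity of this arrangement is exactly the collection of $6$ projective lines $\{l_i=0\}\subset\PP(V)$, so the arrangement $\B(6,3)$ constructed here is the one governed by Example \ref{6lines}.

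The key is a projective duality identity carried out in the trace-at-infinity plane $\PP(V)$. For the partition of indices into pairs $\{1,2\},\{3,4\},\{5,6\}$, the point $\{l_1=0\}\cap\{l_2=0\}\in\PP(V)$ is dual to the line through $[l_1],[l_2]\in\PP(V^*)$, and likewise for the other two pairs; hence the three intersection points are collinear in $\PP(V)$ if and only if the three lines through the pairs $\{[l_1],[l_2]\}$, $\{[l_3],[l_4]\}$, $\{[l_5],[l_6]\}$ are concurrent in $\PP(V^*)$. By Example \ref{6lines} (the $s=2$ case of Lemma \ref{lem:case3}, invoked through Theorem \ref{proofmain}(2)), the former collinearity is equivalent to $\Codim(D_{K_1}\cap D_{K_2}\cap D_{K_3})=2$ for $K_1=\{1,2,3,4\}$, $K_2=\{3,4,5,6\}$, $K_3=\{1,2,5,6\}$, i.e. to the presence of a codimension $2$ stratum of multiplicity $3$. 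Thus concurrency of the three pair-lines of the original $6$ points for this partition is equivalent to the presence of that stratum.

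I would then read the same stratum off the Gale side. By the Proposition on the essential part, the essential-part hyperplane $D_K$ in $\PP(W)=\PP^2$ is the line through the Gale points indexed by the complement $K^c$; for the three sets above the complements are $\{5,6\},\{1,2\},\{3,4\}$, so $D_{K_1},D_{K_2},D_{K_3}$ are precisely the lines through the Gale pairs $\{[m_5],[m_6]\}$, $\{[m_1],[m_2]\}$, $\{[m_3],[m_4]\}$. Their intersection has codimension $2$ in $W\cong\CC^3$ exactly when these three lines are concurrent in $\PP(W)$. Hence the same stratum is present if and only if the three pair-lines of the Gale transform, for the \emph{same} partition $\{1,2\},\{3,4\},\{5,6\}$, are concurrent. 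Combining the two readings, the concurrency property for a fixed partition holds for the $6$ points if and only if it holds for their Gale transform; since a relabelling of the points relabels the Gale transform identically, existence of \emph{some} such partition is preserved, which is the assertion.

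The main obstacle I anticipate is bookkeeping rather than geometry: one must keep straight the two distinct copies of $\PP^2$ (the configuration plane $\PP(V^*)$ versus the trace-at-infinity plane $\PP(V)$), apply duality only in the latter, and verify that the index-complementation $K\mapsto K^c$ appearing in the Gale description matches the pairing so that one and the same partition governs both sides. I would also confirm that genericity of the $6$ points keeps all relevant lines and intersection points in the stable position required by Lemma \ref{lem:case3}, so that the only degeneration in play is the collinearity (equivalently concurrency) itself.
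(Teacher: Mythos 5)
Your argument is correct, but it follows a genuinely different route from the paper's own proof. You derive the proposition from results already established earlier in the paper: projective duality in the trace-at-infinity plane $\PP(V)$ converts concurrency of the three pair-lines of the six points into collinearity of the three double points $H_{\infty,i,j}$; Lemma \ref{lem:case3} (in the form of Example \ref{6lines}) converts that collinearity into $\Codim\,(D_{K_1}\cap D_{K_2}\cap D_{K_3})=2$; and the description of the essential part of $\B(6,3)$ via the Gale transform identifies $D_K$ with the hyperplane of $W$ spanned by the two Gale points indexed by $K^c$, so that the same codimension condition becomes concurrency of the pair-lines of the Gale configuration for the same partition (the pairs occur as the $K_i\cap K_j$ on one side and as the $K_i^c$ on the other, and both enumerations exhaust the same three pairs). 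This is precisely the equivalence the paper records in the Remark preceding the proposition; your bookkeeping of the two copies of $\PP^2$ and of the complementation $K\mapsto K^c$ is accurate, and the genericity caveats you raise are the right ones. The paper's own proof is deliberately different: it is meant as an \emph{independent} verification, using the classical realization of the Gale transform of six points in $\PP^2$ via the double-six on a cubic surface. There the partition condition is translated into the existence of an Eckardt point not lying on the six exceptional curves, and invariance follows because the birational map between the two planes (blowing up the six points and contracting the six conics) is an isomorphism on the complement of the conics, which contains the concurrency point. Your approach is shorter and stays entirely within the machinery of Theorem \ref{proofmain}; the paper's approach buys a consistency check on that theorem from classical algebraic geometry, which is the stated purpose of section \ref{cubics}. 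The only caution is that your route cannot serve as such an independent check, since it takes Lemma \ref{lem:case3} as an input.
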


\begin{remark}
After replacing hyperplanes by the
points of a projective dual space, this proposition is equivalent
 to the case $n=6,k=3$ of the main
theorem (cf. also Example \ref{6lines}). This equivalence follows from
the classical description of the Gale
transform recalled in the proof below.

The more general case, considered in
the Lemma \ref{lem:case3}, can be interpreted as
the following property of the Gale transform $(\PP^{2s-2})^{\times 3s} 
\rightarrow (\PP^s)^{\times 3s}$.

The condition that there is a partition of
$3s$ points in $\PP^s$ into 3 groups of $s$ points, each set spanning
a hyperplane in $\PP^s$ and that, moreover, 
such that these hyperplanes belong to a pencil, is equivalent to the 
condition that the Gale transform of this set of points in $\PP^{2s-2}$ admits a partition into 3
groups of cardinality $s$ such that the triple of $s-1$ dimensional
subspaces, each spanned by an $s$-tuple in $\PP^{2s-2}$, have a non-empty intersection.

This restatement follows immediately  from the dualizaton of hyperplanes of
the general position arrangement. 
Indeed, $3s$ hyperplanes of the general position arrangement in $\CC^{2s-1}$ considered in
Lemma \ref{lem:case3} define $3s$ hyperplanes 
$\PP^{2s-2}$ or equivalently $3s$
points in the dual projective space. The assumption of the dependency of
$3s$ hyperplanes in $\PP^{2s-2}$, after the dualization is
equivalent to requiring that 3 $s-1$-dimensional subspaces $\eta_1^{s-1},\eta_2^{s-1},\eta_3^{s-1}
\subset \PP^{2s-2}$ each spanned by one of 3 subsets of cardinality $s$
(i.e. subsets $K_i\cap K_j$ in notations of definition \ref{depend}) have non-empty intersection.
Since $n-k-1=3s-(2s-1)-1=s$, by Proposition \ref{galeprop}, the hyperplanes of
the essential part of the discriminant arrangment are
spanned by $s$ -subsets of the set of $3s$ points in 
$\PP^s$. Lemma \ref{lem:case3} states that the dependency condition is equivalent to
the existence of the triple of hyperplanes in the discriminantal arrangement
belonging to a pencil of hyperplanes which gives our claim. 

It would be interesting to have a geometric
description of the Gale transform allowing one to show this directly for $s>2$.
\end{remark}

\begin{proof}  We shall use the projective setting which, in this case, relates 
6-tuples of points in $\PP^2$ to another 6-tuples in another
copy of $\PP^2$. Recall that the smooth cubic surfaces in $\PP^3$ 
(i.e., the del Pezzo surfaces of degree $3$) 
can be viewed as  blow ups of a 6-tuples of points in 
$\PP^2$ and the classes of projective equivalence of 
6-tuples of points in $\PP^2$ correspond to isomorphism classes of 
cubic surfaces. The 6-tuple of points in $\PP^2$ is obtained
by contracting 6 of $27$ lines  having pairwise empty intersections.
In terms of the blow up of 6 points in $\PP^2$,  each of $27$ lines is one
of the following: 

1. 6 exceptional curves of the blow up;

2. proper preimages of 15 lines defined by pairs of points;

3. proper preimages of 
6 quadrics determined by a 5 points subset of the blown up 6-tuple.

6-tuples  of lines as above on  a cubic surface $V$ correspond to the following homology classes
in $H^2(V,\ZZ)$:
\begin{equation}
         h_i,i=1,...,6, \ \ \ \ \    (h_i,h_j)=-\delta^i_j \quad .
\end{equation}
Given such 6-tuple $h_i$, one has a unique additional 6-tuple $h_i'$
characterized by the following:
together with $h_i$ the collection $h_i'$ form a double six, i.e., the
following relations are satisfied:
\begin{equation}
   h_ih_j=h_i'h_j'=-\delta^i_j, \ \ \ \ h_ih_j'=1-\delta^i_j \quad .
\end{equation}
Using the description of 27 lines above in terms of lines and quadrics on $\PP^2$ corresponding to the lines
on a del Pezzo surface, the second
component $h_i'$ of a double six, in which 
the first component $h_i$ is  
formed by the 6-tuple of exceptional curves, can be described as
follows. The 6-tuple $h_i'$
 consists 
of the proper preimages of quadrics labeled in the way 
which assigns to (the class of) exceptional curve $h_P$ contracted to a point $P \in \PP^2$ 
(the class of)  the quadric $h_P'$ passing through points of the 6-tuple of points in $\PP^2$ 
distinct  from $P$. 

Now the existence of partition of 6-tuples as in Proposition \ref{gale} is
equivalent to existence of Eckardt point (i.e., a point common to a triple 
of lines on cubic surface) not involving the exceptional curves
and to show Proposition \ref{gale} one needs 
to show that such Eckardt point exists also for the second component 
of a double six. But each line containing a pair of points $P,P'$ on the plane 
$\P$ obtained 
by contraction of a 6-tuples of disjoint exceptional curves on del
Pezzo surface will be passing 
through a pair of 6 points on the plane $\P'$ obtained by contracting 
proper preimages of 6 quadrics on $\P$. 
Indeed, such a line through $P \in \P$ will intersect the proper 
preimage on the blow up of $\P$ 
of the quadric not containing $P$ at exactly one point (corresponding to the 
intersection point with this quadric distinct from the blown up
point).
Since the blow up of 6 points and contracting proper preimages of 
6 quadrics determined by these 6 points is an isomorphism on the 
complement to quadrics  which contains the concurrency point 
of the triple of lines, the claim follows.
\end{proof}

\section{Fundamental groups of the complements to discriminantal
  arrangements }

\subsection{Nilpotent completion of the fundamental group}

In this section we shall describe the nilpotent completion of 
$\pi_1(\B(n,k,\A))$ in the case when $\A$ is not very generic and the 
 corresponding discriminantal arrangement admits a codimension two strata 
of multiplicity 3. This is a direct consequence of \cite{kohno}
Prop.2.2 (see also \cite{MS}).

\begin{proposition}  Completion of the group ring
  $\CC[\pi_1(\CC^n\setminus \B(n,k,\A))]$ with respect to the 
powers of the augmentation ideals is the quotient of the algebra of
non-commutative 
power series $\CC<<X_J>>, J\in$ $\mathcal{P}_{k+1}(\{1,\ldots ,n \})$ 
by the two-sided ideal generated by relations

(i) $[X_J,\sum_I X_I]$  for a pair of subsets $J \in \mathcal{P}_{k+1}(K),$
 with summation over $I \in \mathcal{P}_{k+1}(K), K \subset \{1,...,n\},\Card~K=k+2$.

(ii) $[X_J,X_I+X_J+X_K]$ where $I,J,K$ are subscripts  corresponding to
triples 
of hyperplanes in the discriminantal arrangement, such that
corresponding hyperplanes in $\A_{\infty}$ satisfy dependency condition 
of Theorem \ref{proofmain} (2a).

(iii) $[X_J,X_K]$ for any pair of sets with $\Card~J,K \ge k+3$ and
such that there does not exist subset $I$ such that triple $I,J,K$ satisfies
the conditions of Theorem \ref{proofmain} (2a).
\end{proposition}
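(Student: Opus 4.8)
The plan is to read the presentation straight off Kohno's description of the nilpotent (Malcev) completion of an arrangement group, combined with the classification of codimension two strata in Theorem \ref{proofmain}. Recall that for the complement $M$ of \emph{any} complex hyperplane arrangement, \cite{kohno} Prop.~2.2 identifies the completion of $\CC[\pi_1(M)]$ with respect to the powers of the augmentation ideal with the degree completion of the universal enveloping algebra of the \emph{holonomy Lie algebra} of the arrangement; as an associative algebra this is the quotient of the free power series algebra $\CC\langle\langle X_H\rangle\rangle$ on one generator $X_H$ per hyperplane $H$ by the two sided ideal generated, for every rank two flat $Y$, by the \emph{holonomy relations}
\[
\Bigl[\,X_H,\ \sum_{H'\supset Y} X_{H'}\Bigr]=0,\qquad H\supset Y,
\]
the inner sum running over all hyperplanes containing $Y$. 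The first step is thus bookkeeping: identify the hyperplanes of $\B(n,k,\A_\infty)$ with the subsets $J\in C(n,k+1)$, so that the generators are exactly the $X_J$ of the statement, and note that rank two flats are precisely the codimension two strata, with the inner sum above running over the hyperplanes $D_J$ containing a given stratum.

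Next I would feed Theorem \ref{proofmain} into this presentation. By that theorem every codimension two stratum of $\B(n,k,\A_\infty)$ has multiplicity $k+2$, $3$ or $2$ (leaving aside the $k=3$ exception discussed below), and these three cases correspond to the three families (i)--(iii). A multiplicity $k+2$ stratum is a subspace $D_K$ with $\Card K=k+2$, whose containing hyperplanes are exactly the $D_I$ with $I\subset K$, $\Card I=k+1$, by part (1) of Theorem \ref{proofmain}; its holonomy relation is $[X_J,\sum_{I\subset K}X_I]=0$, which is (i). A multiplicity three stratum is, by part (2), cut out by a dependency triple $D_I,D_J,D_K$, so the stabilising set is $\{D_I,D_J,D_K\}$ and the holonomy relation reads $[X_J,X_I+X_J+X_K]=0$, i.e. (ii). Finally a transversal (multiplicity two) stratum is contained in exactly two hyperplanes $D_J,D_K$, and here the holonomy relation degenerates: $[X_J,X_J+X_K]=[X_J,X_K]$, leaving only the commuting relation (iii).

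The remaining routine work is to match the \emph{index} constraints so that each rank two flat is counted once. For (iii) one records that a pair $D_J,D_K$ cuts out a transversal codimension two flat precisely when it is neither of type (i) nor of type (ii): the type (i) situation occurs exactly when $\Card(J\cap K)=k$, i.e. $\Card(J\cup K)=k+2$, so the cardinality condition in (iii) excludes it, while the absence of a third set $I$ completing $J,K$ to a dependency triple in the sense of Theorem \ref{proofmain}(2a) excludes type (ii). With these conditions imposed, the generating relations of the ideal are exactly (i)--(iii), which is the assertion.

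The main obstacle will be the honest treatment of the exceptional case $k=3$. Theorem \ref{proofmain}(3) only excludes codimension two strata of multiplicity $4$ when $k\neq 3$, so for $k=3$ one must either restrict the statement or adjoin the extra holonomy relations $[X_J,\sum_M X_M]=0$ coming from any multiplicity four stratum (a four fold sum over the hyperplanes through it); these are genuinely new relations not subsumed by (i)--(iii). A secondary, but routine, point is to confirm that \cite{kohno} Prop.~2.2 applies to $\B(n,k,\A_\infty)$ in the \emph{non-generic} case as well: this holds because the argument uses only flatness of the logarithmic connection $\Omega=\sum_J X_J\,\omega_{D_J}$ together with the formality of complex hyperplane arrangement complements, both of which are insensitive to the combinatorial type of $\A_\infty$, so the only combinatorial input needed is the list of rank two flats supplied by Theorem \ref{proofmain}.
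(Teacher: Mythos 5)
Your proposal matches the paper's own (one-line) argument: the paper simply declares the proposition a direct consequence of Kohno's Prop.~2.2 together with the classification of codimension two strata in Theorem \ref{proofmain}, which is exactly the holonomy-relation bookkeeping you carry out. Your added caveat about the exceptional case $k=3$ (possible multiplicity four strata requiring extra relations not of the forms (i)--(iii)) is a legitimate point that the paper's statement and citation leave unaddressed.
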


\subsection{Braid monodromy of discriminantal arrangements and 
{\bf $\pi_1(\SS\setminus \B(n,k,\A))$}}\label{braidmonodromy}
 We shall describe the fundamental 
group of the complement to a discriminantal arrangement.
In fact, we shall obtain the braid monodromy of the 
generic plane section of discriminantal arrangement, which 
by the classical van Kampen procedure yields the presentation 
of the fundamental group.

We describe the braid monodromy of the generic section 
of $\B(n,k,\A)$ in terms of a collection 
of orderings of hyperplanes of $\B(n,k,\A)$ constructed in 
terms of equations (\ref{matrixofequations}) of 
arrangement $\A$ of hyperplanes in the general position $H^0_j,j=1,...,n$ as follows. 
The generic plane section of $\B(n,k,\A)$
can be described as subset of $\CC^2$ with coordinates $(s,t)$
depending on a choice of generic $a^n,b^n,c^n$ 
(specifying the plane section)  
consisting of points $(s,t)$
such that the rank of the $(k+ 1) \times n$ matrix:
\begin{equation}\label{sectionmatrix}
\begin{pmatrix}\alpha_1^1 & ... & \alpha_k^1 & a^1t+b^1s+c^1 \cr
   ... & ... & ... & ... \cr
     \alpha_1^{n} & ... & \alpha_k^{n} & a^nt+b^ns+c^n \cr
\end{pmatrix}
\end{equation}
is maximal. 
This plane is given in $\SS$ by 
\begin{equation}\label{section}
x_i=a^it+b^is+c^i \quad .
\end{equation}
For fixed $\alpha^i_j,a^i,b^i,c^i, i=1,...,n,j=1,...,k$,
and generic $(t,s)$, the rank of this matrix is $k+1$.
For a generic fixed $s$, there is a finite collection 
$t_1(s)<...<t_{n \choose k+1}(s)$ of real numbers such that 
the rank of (\ref{sectionmatrix}) is $k$: each $t_i(s)$ corresponds
to a $k+1$ subset of $\{1,...,n\}$ labeling a hyperplane in $\B(n,k,\A)$.
Moreover there will be finite collection of real numbers  
$s_1<...<s_N,N \ge {n\choose {k+2}}$
such that for these $s$ there will be strictly less than $n \choose {k+1}$
constants $t$ for which the rank of (\ref{sectionmatrix}) 
is less than $k+1$. In fact, these values $s$ correspond to projections on the $s$-coordinate
of multiple points of the arrangement of lines restriction of $\B(n,k,\A)$ to the $(s,t)$ plane. 
In particular, to each $s_i$ corresponds a subset $P_i$ 
in the sequence $1,...,{n \choose {k+1}}$ 
corresponding to the set of $(k+1)$-subsets yielding the same value
$t(s_i)$.
The cardinality of the
subset $P_i$ is either $k+2$, $3$ 
or 2 (according to the multiplicity of the singular point 
corresponding to $s_i$). 

Recall (cf. for example \cite{Moi} or
\cite{crelle}) that the real line $Im(s)=0$ in the
complex $s$-line $\CC_s\simeq \CC$ can be used to define in a
canonical way the generators of the fundamental group
$\pi_1(\CC_s\setminus \bigcup_{i=1}^N s_i)$ of the complement of $N$
points in $\CC$. In details, the generator
corresponding to the point $s_i, i=1,....,N$ is the loop from a base
point $s_0, Im (s_0)=0, s_0 << 0$, 
to the point $s_i$, circumventing each $s_j, j<i$ as a semi-circle into the halfplane
$Im(s)<0$ and returning back to $s_0$ after making the full circle
around $s_i$. The braid
monodromy for such a path is the product of factors corresponding to each
$s_j, j \le i$ , i.e., the half twist $\beta_{P_j}$
corresponding to $P_j$ for $j<i$ and the full
twist $\beta_{P_i}^2$.

\begin{theorem} 
1.The braid monodromy of a generic plane section corresponding to section
(\ref{section}) of $\B(n,k,\A)$ 
is given by 
\begin{equation}\label{braids} 
 \Pi_{1 \le k \le N}\Gamma_i \ \ \ \mbox{where}  \ \ \ 
 \Gamma_i=\beta_{P_1}^{-1} ... \beta_{P_{k-1}}^{-1}  
\beta_{P_k}^2\beta_{P_{k-1}}...\beta_{P_2}\beta_{P_1}  \quad .  
\end{equation}

2.The fundamental group $\pi_1(\CC^n\setminus \B(n,k,\A))$
has the following presentation:
\begin{equation}          
     \Gamma_i(\delta_j)=\delta_j \ \ j=1,...,{n \choose {k+1}}, i=1,...,N \quad .
\end{equation}
\end{theorem}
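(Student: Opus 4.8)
The plan is to reduce the computation to a generic plane section and then run the classical braid-monodromy/van Kampen machinery on the resulting line arrangement, using Theorem \ref{proofmain} to supply the combinatorial data. First I would invoke the Zariski theorem of Lefschetz type for complements of divisors: a generic two-dimensional affine section $P\cong\CC^2$ induces an isomorphism $\pi_1(P\setminus(\B(n,k,\A^0_\infty)\cap P))\to\pi_1(\CC^n\setminus\B(n,k,\A^0_\infty))$. The section (\ref{section}) with generic $a^i,b^i,c^i$ realizes such a plane, and by (\ref{sectionmatrix}) the trace $\B(n,k,\A^0_\infty)\cap P$ is an arrangement $\mathcal{L}$ of $\binom{n}{k+1}$ lines in the $(s,t)$-plane, one line $\ell_J$ for each $(k+1)$-subset $J$, namely the locus where the rank of (\ref{sectionmatrix}) drops to $k$. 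Thus it suffices to compute $\pi_1(\CC^2\setminus\mathcal{L})$.

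Next I would set up the projection $\pi\colon(s,t)\mapsto s$ and check that $\mathcal{L}$ is in \emph{good position} with respect to it. For generic real $s$ the $\binom{n}{k+1}$ roots $t_1(s)<\cdots<t_{\binom{n}{k+1}}(s)$ are real, simple and ordered, so a generic fibre of $\pi$ is $\CC$ with these $\binom{n}{k+1}$ marked points, and $\pi_1$ of this punctured line is free on meridians $\delta_1,\dots,\delta_{\binom{n}{k+1}}$. The finitely many singular values $s_1<\cdots<s_N$ are exactly the $s$-coordinates of the multiple points of $\mathcal{L}$, and by Theorem \ref{proofmain} each such point has multiplicity $k+2$, $3$ or $2$; the set $P_i$ records which lines $\ell_J$ pass through the point lying over $s_i$. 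Genericity of $a^i,b^i,c^i$ guarantees that the $s_i$ are distinct and that no two critical points share an $s$-coordinate, so that the hypotheses of the van Kampen procedure are met.

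I would then compute the braid-monodromy homomorphism from $\pi_1(\CC\setminus\{s_1,\dots,s_N\})$ to the braid group on $\binom{n}{k+1}$ strings. Choosing the standard generating loops $\gamma_i$ around $s_i$, joined to a base point lying to the left of all $s_j$, the local analysis at a point where the $|P_i|$ lines of $P_i$ cross is the classical one: the monodromy of a small loop around an ordinary multiple point of multiplicity $m$ is the full twist on the corresponding $m$ strands, here $\beta_{P_i}^2$, the square of the half twist $\beta_{P_i}$. Transporting this local monodromy back to the base point along $\gamma_i$ forces the strands past the earlier critical points $s_1,\dots,s_{i-1}$, each contributing the half twist $\beta_{P_j}$ to the change of frame; conjugating the full twist by the accumulated product $\beta_{P_{i-1}}\cdots\beta_{P_1}$ yields exactly the factor $\Gamma_i$ of (\ref{braids}), and the total monodromy is the ordered product $\prod_{1\le i\le N}\Gamma_i$. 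This proves part 1.

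Finally, part 2 is the van Kampen theorem: $\pi_1(\CC^2\setminus\mathcal{L})$ is the quotient of the free group on $\delta_1,\dots,\delta_{\binom{n}{k+1}}$ by the relations $\Gamma_i(\delta_j)=\delta_j$ for all $i,j$, and by the first paragraph this group equals $\pi_1(\CC^n\setminus\B(n,k,\A^0_\infty))$. The step I expect to demand the most care is verifying that a generic section (\ref{section}) really places $\mathcal{L}$ in good position with respect to $\pi$ — real, distinct critical values with singular points of the multiplicities predicted by Theorem \ref{proofmain} — since it is precisely this input, together with the combinatorial identification of the sets $P_i$, that lets the otherwise classical braid-monodromy factorization be read off; the purely topological local and global monodromy computations are then standard.
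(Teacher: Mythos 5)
Your proposal is correct and follows essentially the same route as the paper: the paper simply declares the theorem a standard application of braid monodromy theory (citing Moishezon and Libgober's work, with Salvetti's complex as an alternative), relying on exactly the setup you describe — the generic section (\ref{section}), the ordered real roots $t_1(s)<\cdots<t_{\binom{n}{k+1}}(s)$, the singular values $s_i$ with their sets $P_i$ of multiplicity $k+2$, $3$ or $2$ supplied by Theorem \ref{proofmain}, the conjugated full twists $\Gamma_i$, and the van Kampen relations $\Gamma_i(\delta_j)=\delta_j$. You have merely written out the details that the paper leaves to the cited references.
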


These statements are the standard applications of the results from
the theory of braid monodromy (cf., among others, \cite{Moi},\cite{crelle}). 
Different presentations can be obtained via Salvetti's presentation or Randell's presentation for complement of hyperplane arrangements (see \cite{salvetti}, \cite{Ran}). For $\A^0$ very generic, this yields a presentation 
equivalent to the one given in \cite{RL}.

\end{document}